\DeclareFontFamily{U}{euf}{}
\DeclareFontShape{U}{euf}{m}{n}{%
  <5><6><7><8><9>gen*eufm%
  <10><10.95><12><14.4><17.28><20.74><24.88>eufm10%
  }{}
\DeclareFontShape{U}{euf}{b}{n}{%
  <5><6><7><8><9>gen*eufb%
  <10><10.95><12><14.4><17.28><20.74><24.88>eufb10%
  }{}
\DeclareFontFamily{U}{msb}{}
\DeclareFontShape{U}{msb}{m}{n}{%
  <5><6><7><8><9>gen*msbm%
  <10><10.95><12><14.4><17.28><20.74><24.88>msbm10%
  }{}
\DeclareFontFamily{U}{msa}{}
\DeclareFontShape{U}{msa}{m}{n}{%
  <5><6><7><8><9>gen*msam%
  <10><10.95><12><14.4><17.28><20.74><24.88>msam10%
  }{}
\newtheorem{theorem}{Theorem}[section]
\newtheorem{lemma}[theorem]{Lemma}
\newtheorem{corollary}[theorem]{Corollary}
\theoremstyle{definition}
\newtheorem{remark}[theorem]{Remark}
\numberwithin{equation}{section} \frenchspacing
\begin{document}

\title[hypergeometric Bernoulli polynomials]
{On hypergeometric Bernoulli numbers and polynomials}

\author{Su Hu}

\address{Department of Mathematics, South China University of Technology, Guangzhou, Guangdong 510640, China}
\email{hus04@mails.tsinghua.edu.cn}

\author{Min-Soo Kim}

\address{Center for General Education, Kyungnam University,
7(Woryeong-dong) kyungnamdaehak-ro, Masanhappo-gu, Changwon-si, Gyeongsangnam-do 631-701, Republic of Korea
}
\email{mskim@kyungnam.ac.kr}





\subjclass[2000]{Primary 11M35; Secondary 11B68.}


\keywords{Hypergeometric Bernoulli numbers and polynomials, Sums of products, Differential equations, Recurrence formulas, Appell polynomials}

\begin{abstract}
In this note, we shall provide several properties of hypergeometric Bernoulli numbers and polynomials, including sums of products identity, differential equations and recurrence formulas.
\end{abstract}

\maketitle

\def\e{z}
\def\l{z}
\def\res{{\rm Res}}


\section{Introduction}

For $N\in\mathbb N,$ Howard \cite{Ho,Ho2} defined the hypergeometric Bernoulli polynomials $B_{N,n}(x)$ by the generating function
\begin{equation}\label{hn-def1-0}
\frac{t^N e^{xt}/N!}{e^t-T_{N-1}(t)}=\sum_{n=0}^{\infty}B_{N,n}(x)\frac{t^{n}}{n!},
\end{equation}
where $T_{N-1}(t)$ is the Taylor polynomial of order $N-1$ for the exponential function.
In particular, when $N=1,$ we recover the classical Bernoulli polynomials, i.e. $B_{1,n}(x)=B_{n}(x)$ (see (\ref{cl-def}) below).

In this note, we shall give an expression for a sum of products of the hypergeometric Bernoulli polynomials  (see Theorem \ref{sum-pro} below),
generalizing some classical results for Bernoulli polynomials. We also present a differential equation and a recurrence relation for these polynomials (see Theorems \ref{appell-dif} and  \ref{appell-rec}).

Recall that, the Bernoulli polynomials is defined by the generating function
\begin{equation}\label{cl-def}
 \frac{te^{xt}}{e^t-1} =\sum_{n=0}^\infty B_{n}(x)\frac{t^n}{n!},
\end{equation}
while the  Bernoulli numbers is defined by $B_n=B_{n}(0)$.

The Bernoulli numbers and polynomials have many applications and satisfy many interesting identities.
The most remarkable  one  is
Euler's sums of products identity
\begin{equation}\label{Ber-id}
\sum_{i=0}^n\binom niB_iB_{n-i}=-nB_{n-1}-(n-1)B_n \quad(n\geq1).
\end{equation}
This identity has been generalized by many authors from different directions (see~\cite{AD,Ch,Di,KMS,KS,Pe1,Pe}).
In particular, Dilcher \cite{Di} provided explicit expressions
for sums of products for arbitrarily many Bernoulli numbers and
polynomials.

Another approach to Bernoulli polynomials is to define them as an Appell sequence with zero mean:
\begin{equation}\label{B0}
B_0(x)=1,
\end{equation}
\begin{equation}\label{Bp}
B'_n(x)=nB_{n-1}(x),
\end{equation}
\begin{equation}\label{Bi}
\int_0^1 B_n(x)dx=\begin{cases}1 & n=0 \\ 0 & n>0. \end{cases}
\end{equation}
Based on the properties of Appell polynomials, He and Ricci~\cite{HR} proved that the Bernoulli polynomials $B_{n}(x)$ satisfy the following differential equations:
\begin{equation}\label{HR}\frac{B_{n}}{n!}y^{(n)}+\frac{B_{n-1}}{(n-1)!}y^{(n-1)}+\cdots+\frac{B_{2}}{2!}y''
-\left(x-\frac 1{2}\right)y'+ny=0.\end{equation}

Howard~\cite{Ho,Ho2} gave a generalization of Bernoulli polynomials by considering the following generating function:
\begin{equation}\label{hn-def}
\frac{t^2 e^{xt}/2}{e^t-1-t}=\sum_{n=0}^{\infty}A_n(x)\frac{t^{n}}{n!}
\end{equation}
and more generally, for all positive integer $N$
\begin{equation}\label{hn-def1}
\frac{t^N e^{xt}/N!}{e^t-T_{N-1}(t)}=\sum_{n=0}^{\infty}B_{N,n}(x)\frac{t^{n}}{n!},
\end{equation}
where $T_{N-1}(t)$ is the Taylor polynomial of order $N-1$ for the exponential function.
For the cases $N=1$ and $N=2,$ (\ref{hn-def1}) reduces to (\ref{cl-def}) and (\ref{hn-def}), respectively.
We see that the polynomials $B_{N,n}(x)$ have rational coefficients.

The polynomials $B_{N,n}(x)$ are named hypergeometric Bernoulli polynomials, while the numbers $B_{N,n}=B_{N,n}(0)$ are named
hypergeometric Bernoulli numbers since the generating function $f(t)=\frac{e^t-T_{N-1}(t)}{t^N/N!}$ can be expressed as ${_1F_1(1,N+1;t)},$
where the confluent hypergeometric function $_1F_1(a,b;t)$ is defined by
$$_1F_1(a,b;t)=\sum_{n=0}^\infty\frac{(a)_n}{(b)_n}\frac{t^n}{n!}$$
and $(a)_n$ is the Pochhammer symbol
$$(a)_n=\begin{cases} a(a+1)\cdots(a+n-1)&(n\geq1) \\
1&(n=0)
\end{cases}$$ (see \cite[p. 2261]{Ka}).

As their classical counterparts, the Bernoulli numbers and polynomials, the generalized Bernoulli numbers and polynomials,
the hypergeometric Bernoulli numbers and polynomials also satisfy many interesting properties
(\cite{zeta,HN,Hassen,Lu,Nguyen,NC}).

Kamano~\cite{Ka}  proved  the following result for sums of products of hypergeometric Bernoulli numbers, which is a generalization of
the works by Euler and Dilcher (see \cite{Di}).

\begin{theorem}[{Kamano \cite[p.~2262, Main Theorem]{Ka}}]\label{Kamano}
Let $N$ and $r$ be positive integers. For any integer $n\geq r-1,$ we have
$$
\begin{aligned}
\sum_{{\substack{i_1,\ldots,i_r\geq0\\i_1+\cdots+i_r=n}}}&\frac{n!}{i_{1}!\cdots i_{r}!}B_{N,i_1}\cdots B_{N,i_r} \\
&=\frac1{N^{r-1}}\sum_{i=0}^{r-1}A_r^{(N)}(i;1+N(r-1)-n)(-1)^i\binom ni i! B_{N,n-i},
\end{aligned}
$$
where $A_r^{(N)}(i;s)\in\mathbb Q[s](0\leq i\leq r-1)$ are polynomials defined by the following recurrence relation:
\begin{equation}\label{thm-eq}
\begin{aligned}
&A_1^{(N)}(0;s)=1 \\
&A_r^{(N)}(i;s)=\frac{s-1}{r-1}A_{r-1}^{(N)}(i;s-N)+A_{r-1}^{(N)}(i-1;s-N+1).
\end{aligned}
\end{equation}
Here $r\geq2$ and $A_{r}^{(N)}(i;s)$ are defined to be zero for $i\leq-1$ and $i\geq r.$
\end{theorem}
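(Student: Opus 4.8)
The plan is to pass from the hypergeometric Bernoulli numbers to the generating function
\[
F_N(t)\;:=\;\sum_{n=0}^{\infty}B_{N,n}\,\frac{t^n}{n!}\;=\;\frac{t^N/N!}{e^t-T_{N-1}(t)},
\]
which is a well-defined formal power series because $e^t-T_{N-1}(t)=\sum_{k\ge N}t^k/k!$ has a zero of order exactly $N$ at the origin, with $F_N(0)=B_{N,0}=1$. The left-hand side of the theorem is exactly the coefficient of $t^n/n!$ in $F_N(t)^r$; write $S_n^{(r)}$ for it. The idea is to prove a first-order differential equation for $F_N$, iterate it into a two-term recurrence in $r$ for the $S_n^{(r)}$, and then recognise the recurrence (\ref{thm-eq}) for $A_r^{(N)}(i;s)$ inside an induction on $r$.

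First I would derive the differential equation. With $\phi_N(t)=e^t-T_{N-1}(t)$ one has $\phi_N'(t)=\phi_N(t)+t^{N-1}/(N-1)!$ and $(t^N/N!)'=(N/t)(t^N/N!)$; differentiating $F_N=(t^N/N!)/\phi_N$ and simplifying gives
\[
t\,F_N'(t)=N\,F_N(t)-t\,F_N(t)-N\,F_N(t)^2,
\]
equivalently $N\,F_N(t)^2=(N-t)F_N(t)-t\,F_N'(t)$ (for $N=1$ this is the identity underlying Euler's formula (\ref{Ber-id})). Multiplying by $F_N(t)^{r-2}$ and using $F_N'F_N^{r-2}=\tfrac1{r-1}(F_N^{r-1})'$ yields, for $r\ge2$,
\[
N\,F_N(t)^r=(N-t)F_N(t)^{r-1}-\frac{t}{r-1}\,\frac{d}{dt}\bigl(F_N(t)^{r-1}\bigr),
\]
and comparing the coefficients of $t^n/n!$ on both sides gives
\[
N\,S_n^{(r)}=\Bigl(N-\frac{n}{r-1}\Bigr)S_n^{(r-1)}-n\,S_{n-1}^{(r-1)}\qquad(r\ge2).
\]

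Next I would induct on $r$. The case $r=1$ is the identity $S_n^{(1)}=B_{N,n}$, which matches the claim since $A_1^{(N)}(0;s)=1$. Assuming the formula for $r-1$ and inserting it into the recurrence above, I would rewrite the term $n\,S_{n-1}^{(r-1)}$ using $n\binom{n-1}{i}i!=\binom{n}{i+1}(i+1)!$ and shift the summation index, so that after multiplying through by $N^{r-1}$ both contributions to $N^{r-1}S_n^{(r)}$ are expressed through the same quantities $(-1)^i\binom ni i!\,B_{N,n-i}$ (the boundary term created by the shift vanishes because $A_{r-1}^{(N)}(-1;\cdot)=0$). The coefficient of $(-1)^i\binom ni i!\,B_{N,n-i}$ then equals
\[
\Bigl(N-\frac n{r-1}\Bigr)A_{r-1}^{(N)}\bigl(i;\,1+N(r-2)-n\bigr)+A_{r-1}^{(N)}\bigl(i-1;\,2+N(r-2)-n\bigr),
\]
and this is precisely $A_r^{(N)}\bigl(i;\,1+N(r-1)-n\bigr)$ by (\ref{thm-eq}) evaluated at $s=1+N(r-1)-n$, since then $\tfrac{s-1}{r-1}=N-\tfrac n{r-1}$, $\,s-N=1+N(r-2)-n$, and $\,s-N+1=2+N(r-2)-n$. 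This closes the induction.

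The main obstacle is the index bookkeeping in the last step: one has to keep the shift $n\mapsto n-1$ in the polynomial argument of $A_{r-1}^{(N)}$ synchronised with the shift in its discrete index, and confirm that the convention $A_r^{(N)}(i;s)=0$ for $i\le-1$ or $i\ge r$ is compatible with the terms that actually appear — which is automatic, since $(-1)^i\binom ni i!=n(n-1)\cdots(n-i+1)$ already vanishes for $i>n$, so every sum is effectively finite. Everything else, namely the differential equation and the coefficient comparison, is a short direct computation; this route also shows the identity holds for all $n\ge0$, in particular for $n\ge r-1$.
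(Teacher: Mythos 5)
Your proof is correct and follows essentially the same route as the paper: your differential equation $tF_N'=NF_N-tF_N-NF_N^2$ is Lemma \ref{lem1}(i), your coefficient recurrence $NS_n^{(r)}=(N-\tfrac{n}{r-1})S_n^{(r-1)}-nS_{n-1}^{(r-1)}$ is Lemma \ref{lem3} at $x=0$, and your induction on $r$ matching (\ref{thm-eq}) together with the multinomial identification of $S_n^{(r)}$ is exactly the paper's proof of Theorem \ref{sum-pro} (via Lemma \ref{lem4}) specialized to $x_1=\cdots=x_r=0$.
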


As for Bernoulli polynomials, another approach to hypergeometric Bernoulli polynomials is to define them in terms of Appell sequence with zero mean (comparing with (\ref{B0}), (\ref{Bp}) and (\ref{Bi}) above):
\begin{equation}\label{HB0}
B_{N,0}(x)=1,
\end{equation}
\begin{equation}\label{HBp}
B'_{N,n}(x)=nB_{N,n-1}(x),
\end{equation}
\begin{equation}\label{HBi}
\int_0^1 (1-x)^{N-1}B_{N,n}(x)dx=\begin{cases}\frac{1}{N} & n=0 \\ 0 & n>0\end{cases}
\end{equation} (see \cite[p. 768]{HN}).

This paper contains further properties of the hypergeometric Bernoulli numbers and polynomials, including a
generalization of Kamano's result.

\begin{theorem}\label{sum-pro}
Let $N$ and $r$ be positive integers and let $x=x_1+\cdots+x_r.$ For any integer $n\geq r-1,$ we have
$$
\begin{aligned}
\sum_{{\substack{i_1,\ldots,i_r\geq0\\i_1+\cdots+i_r=n}}}&\frac{n!}{i_{1}!\cdots i_{r}!}B_{N,i_1}(x_1)\cdots B_{N,i_r}(x_r) \\
&=\frac1{N^{r-1}}\sum_{i=0}^{r-1}A_r^{(N)}(i,x;1+N(r-1)-n)(-1)^i\binom ni i! B_{N,n-i}(x),
\end{aligned}
$$
where $A_r^{(N)}(i,x;s)\in\mathbb Q[x,s](0\leq i\leq r-1)$ are polynomials defined by the recurrence relation:
\begin{equation}\label{thm-eq0}
\begin{aligned}
&A_1^{(N)}(0,x;s)=1 \\
&A_r^{(N)}(i,x;s)=\frac{s-1}{r-1}A_{r-1}^{(N)}(i,x;s-N)-\frac{x-(r-1)}{r-1}A_{r-1}^{(N)}(i-1,x;s-N+1).
\end{aligned}
\end{equation}
Here $r\geq2$ and $A_{r}^{(N)}(i,x;s)$ are defined to be zero for $i\leq-1$ and $i\geq r.$
\end{theorem}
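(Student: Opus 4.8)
The plan is to recast the identity in terms of generating functions and reduce it to an induction on $r$ driven precisely by the recursion (\ref{thm-eq0}). Set $f_N(t)=\dfrac{e^t-T_{N-1}(t)}{t^N/N!}\in\mathbb Q[[t]]$, so $f_N(0)=1$ and (\ref{hn-def1}) reads $\sum_{n\ge0}B_{N,n}(x)\frac{t^n}{n!}=e^{xt}/f_N(t)$. Writing $F_r(x,t):=e^{xt}/f_N(t)^r$ and multiplying $r$ copies of this identity (with $x_1,\dots,x_r$, using $e^{x_1t}\cdots e^{x_rt}=e^{xt}$), the left side of the theorem equals $n!\,[t^n]F_r(x,t)$, while $(-1)^i\binom ni i!\,B_{N,n-i}(x)=(-1)^i n!\,[t^{n-i}]F_1(x,t)$. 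Thus the theorem is equivalent, after dividing by $n!$, to
\begin{equation*}
[t^n]F_r(x,t)=\frac1{N^{r-1}}\sum_{i=0}^{r-1}A_r^{(N)}(i,x;1+N(r-1)-n)\,(-1)^i\,[t^{n-i}]F_1(x,t),\qquad n\ge r-1.\tag{$\dagger$}
\end{equation*}

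The key input is a first-order linear ODE for $f_N$. From $\frac{d}{dt}(e^t-T_{N-1}(t))=e^t-T_{N-2}(t)=(e^t-T_{N-1}(t))+\frac{t^{N-1}}{(N-1)!}$, substituting $e^t-T_{N-1}(t)=\frac{t^N}{N!}f_N(t)$ and cancelling $t^{N-1}$ gives
\begin{equation*}
t f_N'(t)=(t-N)f_N(t)+N.\tag{$\star$}
\end{equation*}
Differentiating $F_r=e^{xt}f_N^{-r}$ yields $\partial_t F_r=xF_r-rf_N'F_{r+1}$, and eliminating $f_N'$ via $(\star)$ (using $f_NF_{r+1}=F_r$) turns this into $rN\,F_{r+1}=(tx-rt+rN)F_r-t\,\partial_tF_r$. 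Comparing coefficients of $t^m$ and replacing $r$ by $r-1$ gives the coefficient recursion
\begin{equation*}
[t^m]F_r=\frac{\big((r-1)N-m\big)[t^m]F_{r-1}+\big(x-(r-1)\big)[t^{m-1}]F_{r-1}}{(r-1)N}\qquad(r\ge2).\tag{$\star\star$}
\end{equation*}

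With $(\star\star)$ in hand I would prove $(\dagger)$ by induction on $r$. The case $r=1$ is immediate since $A_1^{(N)}(0,x;s)=1$. For the step, apply $(\star\star)$ at $m=n$, insert the inductive hypothesis for $r-1$ evaluated at both $n$ and $n-1$ — this is exactly where the hypothesis $n\ge r-1$ is used, since it forces $n-1\ge(r-1)-1$ — re-index the two resulting sums so each becomes $\sum_i(\cdots)(-1)^i[t^{n-i}]F_1$, and collect. Putting $s:=1+N(r-1)-n$, the arithmetic identities $(r-1)N-n=s-1$, $1+N(r-2)-n=s-N$, $2+N(r-2)-n=s-N+1$ show that the coefficient of $(-1)^i[t^{n-i}]F_1$ equals $\frac1{N^{r-1}}\Big(\frac{s-1}{r-1}A_{r-1}^{(N)}(i,x;s-N)-\frac{x-(r-1)}{r-1}A_{r-1}^{(N)}(i-1,x;s-N+1)\Big)$, which is $\frac1{N^{r-1}}A_r^{(N)}(i,x;s)$ by (\ref{thm-eq0}); the boundary cases $i=0,r-1$ are covered by the convention $A_{r-1}^{(N)}(-1,x;s)=A_{r-1}^{(N)}(r-1,x;s)=0$. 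This closes the induction, and specializing $x_1=\dots=x_r=0$ recovers Theorem \ref{Kamano}.

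The real obstacle is bookkeeping rather than ideas: the $A_r^{(N)}(i,x;s)$ are defined abstractly by (\ref{thm-eq0}), but the theorem evaluates them at the $n$-dependent point $s=1+N(r-1)-n$, so one must verify that the parameter shifts produced by $(\star\star)$ — which lowers $r$ to $r-1$ and brings in both $[t^n]F_{r-1}$ and $[t^{n-1}]F_{r-1}$ — match the shifts $s\mapsto s-N$ and $s\mapsto s-N+1$ and the coefficient $-\tfrac{x-(r-1)}{r-1}$ encoded in (\ref{thm-eq0}). Tracking the signs and the $x$-dependence (which is absent in Kamano's $x=0$ situation) through the re-indexing is the delicate part. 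Locating the first-order ODE $(\star)$ is the one step that requires insight; after that the argument is essentially forced.
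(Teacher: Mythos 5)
Your proposal is correct and follows essentially the same route as the paper: your relation $rN\,F_{r+1}=(tx-rt+rN)F_r-t\,\partial_t F_r$ is the paper's Lemma \ref{lem1}(ii), your coefficient recursion $(\star\star)$ is Lemma \ref{lem3}, the Cauchy-product identification of the left-hand side is Lemma \ref{lem4}, and the induction on $r$ with the parameter shifts $s-1$, $s-N$, $s-N+1$ matching (\ref{thm-eq0}) is exactly the paper's inductive step.
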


\begin{remark}
Theorem \ref{Kamano} is the special case if $x=0.$
Nguyen and Cheong \cite{NC} also obtained a similar statement using a result by Nguyen~\cite{Nguyen}.
\end{remark}

\begin{remark}
Let $r=2,3$ in  (\ref{thm-eq0}) to obtain
 \begin{equation}\label{coro-1}
\begin{aligned}
&A_2^{(N)}(0,x;1+N-n)= N-n, \\
&A_2^{(N)}(1,x;1+N-n)= -(x-1), \\
&A_3^{(N)}(0,x;1+2N-n)= \frac12(2N-n)(N-n), \\
&A_3^{(N)}(1,x;1+2N-n)= -\frac12(2N-n)(x-1)-\frac12(x-2)(N-n+1), \\
&A_3^{(N)}(2,x;1+2N-n)= \frac12(x-2)(x-1),
\end{aligned}
\end{equation}
thus
\begin{equation}\label{2-sum}
\begin{aligned}
\sum_{{\substack{ i_1,i_2\geq0\\i_1+i_2=n}}}&
\frac{n!}{i_{1}!i_{2}!}B_{N,i_1}(x_1)B_{N,i_2}(x_2) \\
&=\frac1N(N-n)B_{N,n}(x)+\frac nN(x-1)B_{N,n-1}(x) \\
&\quad(\text{where }x=x_1+x_2 \text{ and } n\geq1),
\end{aligned}
\end{equation}
and
\begin{equation}\label{3-sum}
\begin{aligned}
\sum_{{\substack{i_1,i_2,i_3\geq0\\i_1+i_2+i_3=n}}}&
\frac{n!}{i_{1}!i_{2}!i_{3}!}B_{N,i_1}(x_1)B_{N,i_2}(x_2)B_{N,i_3}(x_3) \\
&=\frac1{2N^2}\biggl[(N-n)(2N-n)B_{N,n}(x) \\
&\quad+n((2N-n)(x-1)+(x-2)(N-n+1))B_{N,n-1}(x) \\
&\quad+n(n-1)(x-1)(x-2)B_{N,n-2}(x)\biggl] \\
&\quad(\text{where }x=x_1+x_2+x_3\text{ and } n\geq2).
\end{aligned}
\end{equation}
Nguyen and Cheong \cite[Example 18]{NC} also obtained (\ref{2-sum}) and (\ref{3-sum}).
If $N=1$, $x=0$ and $r=2$, then (\ref{2-sum}) becomes (\ref{Ber-id}).
\end{remark}

For $N,r\in\mathbb N,$
the higher order hypergeometric Bernoulli polynomials $B_{N,n}^{(r)}(x)$ are defined by the generating function
\begin{equation}\label{hn-defh}
\left(\frac{t^N/N!}{e^t-T_{N-1}(t)}\right)^re^{xt}=\sum_{n=0}^\infty B_{N,n}^{(r)}(x)\frac{t^n}{n!}.
\end{equation}
The higher order hypergeometric Bernoulli numbers are defined by $B_{N,n}^{(r)}=B_{N,n}^{(r)}(0)$ (see \cite{Ka,NC}).
The hypergeometric Bernoulli case corresponds to the special value $r = 1.$
In particular, $B_{N,n}^{(1)}(x)=B_{N,n}(x),$ the hypergeometric Bernoulli polynomials, and
$B_{N,n}^{(1)}(0)=B_{N,n},$ the hypergeometric Bernoulli numbers.

If we put $N=1$ in (\ref{hn-defh}), then we have $B_{1,n}^{(r)}(x)=B_{n}^{(r)}(x)$ the classical higher order Bernoulli polynomials.
The classical higher order Bernoulli numbers is defined by $B_n^{(r)}=B_{n}^{(r)}(0)$
(see \cite{Di,KMS,Lu,Pe}).

Using results established in \cite{HR} for the classical Bernoulli polynomials a differential equation for the higher
order Bernoulli polynomials is derived next. The proof is based on the properties of Appell polynomials \cite{Ap,Sh}.

\begin{theorem}\label{appell-dif}
The higher order hypergeometric Bernoulli polynomials $B_{N,n}^{(r)}(x)$ satisfy the differential equation
$$\frac{B_{N,n}}{n!}y^{(n)}+\frac{B_{N,n-1}}{(n-1)!}y^{(n-1)}+\cdots+\frac{B_{N,2}}{2!}y''
-\left(\frac{x}{rN}-\frac 1{N(N+1)}\right)y'
+\frac{n}{rN}y=0.$$
\end{theorem}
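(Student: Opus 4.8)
The plan is to follow the factorization method for Appell polynomials, as in the derivation of (\ref{HR}) for the classical case. Write $G_r(t)=\bigl(t^N/N!\bigr)^r\bigl(e^t-T_{N-1}(t)\bigr)^{-r}$, so that $\sum_{n\geq0}B_{N,n}^{(r)}(x)\,t^n/n!=G_r(t)e^{xt}$ by (\ref{hn-defh}), and note that $G_r(0)=1$. Differentiating this identity in $x$ gives $\frac{d}{dx}B_{N,n}^{(r)}(x)=nB_{N,n-1}^{(r)}(x)$ with $B_{N,0}^{(r)}(x)=1$, so $\{B_{N,n}^{(r)}(x)\}_{n\geq0}$ is an Appell sequence. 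Put $\psi(t):=G_r'(t)/G_r(t)=\sum_{k\geq0}c_kt^k$, a formal power series since $G_r(0)\neq0$. Differentiating $G_r(t)e^{xt}$ in $t$ and comparing coefficients of $t^n/n!$ shows that $\mathcal M:=x+\psi(d/dx)$ raises the index, $\mathcal M B_{N,n}^{(r)}(x)=B_{N,n+1}^{(r)}(x)$, while $\frac1n\frac{d}{dx}$ lowers it. Substituting $B_{N,n-1}^{(r)}(x)=\frac1n\bigl(B_{N,n}^{(r)}\bigr)'(x)$ into $B_{N,n}^{(r)}(x)=\mathcal M B_{N,n-1}^{(r)}(x)$ yields, for $y=B_{N,n}^{(r)}(x)$,
\[
ny=\bigl(x+\psi(d/dx)\bigr)y'=(x+c_0)y'+c_1y''+\cdots+c_{n-1}y^{(n)},
\]
the higher-order terms dropping out because $\deg y=n$.

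It remains to evaluate $c_0,c_1,\dots$. Since $G_r=G_1^r$ we have $\psi=r\,G_1'/G_1=r\,\frac{d}{dt}\log G_1$, and here the special shape of the denominator enters: $T_{N-1}'=T_{N-2}$ and $e^t-T_{N-2}(t)=\bigl(e^t-T_{N-1}(t)\bigr)+t^{N-1}/(N-1)!$, whence
\[
\psi(t)=r\left(\frac Nt-\frac{e^t-T_{N-2}(t)}{e^t-T_{N-1}(t)}\right)=r\left(\frac Nt-1-\frac{t^{N-1}/(N-1)!}{e^t-T_{N-1}(t)}\right)=r\left(\frac Nt-1-\frac Nt\sum_{n\geq0}B_{N,n}\frac{t^n}{n!}\right),
\]
the last equality using $t^{N-1}/(N-1)!=(N/t)\,(t^N/N!)$ and $\sum_{n\geq0}B_{N,n}t^n/n!=G_1(t)$ (set $x=0$ in (\ref{hn-def1})). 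The $n=0$ term cancels $rN/t$; since $B_{N,0}=1$ and $B_{N,1}=-\frac1{N+1}$, read off from $G_1(t)=\bigl(1+\frac t{N+1}+\cdots\bigr)^{-1}$, the constant term is $c_0=r\bigl(-1-NB_{N,1}\bigr)=-\frac r{N+1}$, while for $k\geq1$ one finds $c_k=-rN\,B_{N,k+1}/(k+1)!$, i.e. $c_{j-1}=-rN\,B_{N,j}/j!$ for $j\geq2$.

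Finally, inserting these into $c_{n-1}y^{(n)}+\cdots+c_1y''+(x+c_0)y'-ny=0$ and dividing by $-rN$ gives exactly
\[
\frac{B_{N,n}}{n!}y^{(n)}+\frac{B_{N,n-1}}{(n-1)!}y^{(n-1)}+\cdots+\frac{B_{N,2}}{2!}y''-\left(\frac x{rN}-\frac1{N(N+1)}\right)y'+\frac n{rN}y=0,
\]
since $\frac1{rN}\bigl(x-\frac r{N+1}\bigr)=\frac x{rN}-\frac1{N(N+1)}$; the case $N=r=1$ recovers (\ref{HR}). The main obstacle is the computation in the second paragraph — expressing $\psi(t)$ explicitly through the order-one hypergeometric Bernoulli numbers $B_{N,j}$. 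This rests entirely on the identity $\frac{d}{dt}\bigl(e^t-T_{N-1}(t)\bigr)=e^t-T_{N-2}(t)$ and on the fact that $e^t-T_{N-2}(t)$ exceeds $e^t-T_{N-1}(t)$ by the single monomial $t^{N-1}/(N-1)!$, which is precisely what makes $\frac{d}{dt}\log G_1(t)$ collapse to a clean series in the $B_{N,j}$; the rest is the standard Appell-polynomial factorization machinery (cf. \cite{Ap,Sh,HR}) together with routine bookkeeping.
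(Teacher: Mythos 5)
Your proof is correct and follows essentially the same route as the paper: it is the He--Ricci factorization for Appell sequences, with $A'(t)/A(t)$ computed exactly as in the paper's Lemma \ref{lem1} and equation (\ref{hy-i-1}) (using $T_{N-1}'=T_{N-2}$ and $B_{N,1}=-1/(N+1)$), then converted to the differential equation via the Appell derivative property (Lemma \ref{lem21}). The only cosmetic difference is that you apply the operator identity $ny=\bigl(x+\psi(D_x)\bigr)y'$ directly, whereas the paper first records it as the explicit recurrence of Theorem \ref{appell-rec} and then substitutes derivatives.
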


\begin{corollary}[Lu {\cite[Theorem 2.2]{Lu}}]\label{coro-0}
The classical higher order Bernoulli polynomials $B_{n}^{(r)}(x)$ satisfy the differential equation
$$\frac{B_{n}(1)}{n!}y^{(n)}+\frac{B_{n-1}(1)}{(n-1)!}y^{(n-1)}+\cdots+\frac{B_{2}(1)}{2!}y''
-\left(\frac{x}{r}-\frac 1{2}\right)y'
+\frac{n}{r}y=0.$$
\end{corollary}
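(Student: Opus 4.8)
The plan is to obtain Corollary~\ref{coro-0} simply as the specialization $N=1$ of Theorem~\ref{appell-dif}. First I would recall, as already observed after~(\ref{hn-defh}), that putting $N=1$ makes $T_{0}(t)=1$, so the generating function in~(\ref{hn-defh}) becomes $\left(t/(e^{t}-1)\right)^{r}e^{xt}=\sum_{n\ge0}B_{1,n}^{(r)}(x)t^{n}/n!$; comparing with the definition of the classical higher order Bernoulli polynomials gives $B_{1,n}^{(r)}(x)=B_{n}^{(r)}(x)$, and in particular $B_{1,n}=B_{1,n}(0)=B_{n}=B_{n}(0)$, the classical Bernoulli numbers.

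Next I would substitute $N=1$ directly into the differential equation of Theorem~\ref{appell-dif}. The coefficient $\dfrac{x}{rN}-\dfrac{1}{N(N+1)}$ of $y'$ becomes $\dfrac{x}{r}-\dfrac12$, and the coefficient $\dfrac{n}{rN}$ of $y$ becomes $\dfrac{n}{r}$, so everything matches the asserted equation except that the coefficients of $y^{(k)}$ are written as $B_{1,k}/k!=B_{k}(0)/k!$ rather than $B_{k}(1)/k!$. But these agree: by the classical identity $B_{k}(1)-B_{k}(0)=k\cdot 0^{\,k-1}$ one has $B_{k}(1)=B_{k}(0)$ for every $k\ge2$, and since every index $k$ occurring in the sum of Theorem~\ref{appell-dif} satisfies $2\le k\le n$, the replacement $B_{k}(0)\mapsto B_{k}(1)$ is harmless. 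Assembling these substitutions reproduces verbatim the equation in Corollary~\ref{coro-0}.

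Finally, as a consistency check I would note that imposing in addition $r=1$ collapses the equation to~(\ref{HR}): then $B_{1,k}=B_{k}$, $\dfrac{x}{r}-\dfrac12=x-\dfrac12$ and $\dfrac{n}{r}=n$, which is exactly the He--Ricci differential equation for the classical Bernoulli polynomials. Since the whole argument is a one-step specialization of an already-proved theorem, I do not anticipate any genuine obstacle; the only point requiring care is the $B_{k}(0)$ versus $B_{k}(1)$ bookkeeping above, together with verifying that the summation in Theorem~\ref{appell-dif} indeed ranges only over indices $\ge2$, so that this bookkeeping applies.
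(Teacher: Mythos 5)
Your proposal is correct and follows essentially the same route as the paper, which presents Corollary~\ref{coro-0} simply as the $N=1$ specialization of Theorem~\ref{appell-dif} with $B_{1,k}=B_k$. Your extra bookkeeping that $B_k(1)=B_k(0)$ for all $k\ge 2$ (so the coefficients $B_{1,k}/k!$ may be written as $B_k(1)/k!$ as in Lu's formulation) is the only point the paper leaves tacit, and you handle it correctly.
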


Theorem \ref{appell-dif} in the case $N = r = 1$ gives the next result.

\begin{corollary}[He and Ricci {\cite[Theorem 2.3]{HR}}]\label{coro-2}
The classical Bernoulli polynomials $B_{n}(x)$ satisfy the differential equation
$$\frac{B_{n}}{n!}y^{(n)}+\frac{B_{n-1}}{(n-1)!}y^{(n-1)}+\cdots+\frac{B_{2}}{2!}y''
-\left(x-\frac 1{2}\right)y'+ny=0.$$
\end{corollary}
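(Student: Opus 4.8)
The plan is to obtain Corollary~\ref{coro-2} directly from Theorem~\ref{appell-dif} by specializing to $N=r=1$; once Theorem~\ref{appell-dif} is established, nothing further is needed beyond checking the normalizations. First I would pin down the relevant identifications at $N=r=1$. When $N=1$ the Taylor polynomial $T_{N-1}(t)=T_0(t)=1$, so the defining generating function (\ref{hn-def1}) collapses to $te^{xt}/(e^t-1)$, which is precisely (\ref{cl-def}); hence $B_{1,n}(x)=B_n(x)$, and in particular $B_{1,n}=B_{1,n}(0)=B_n(0)=B_n$. Likewise, setting $N=r=1$ in (\ref{hn-defh}) yields the same generating function $te^{xt}/(e^t-1)$, so $B_{1,n}^{(1)}(x)=B_n(x)$; that is, the unknown $y$ appearing in Theorem~\ref{appell-dif} is exactly the classical Bernoulli polynomial $B_n(x)$.

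With these identifications in place, I would simply read off the coefficients of the differential equation in Theorem~\ref{appell-dif} at $N=r=1$. The higher-derivative coefficients become $B_{N,k}/k!=B_k/k!$ for $2\le k\le n$; the coefficient of $y'$ becomes $-\bigl(\tfrac{x}{rN}-\tfrac1{N(N+1)}\bigr)=-\bigl(x-\tfrac12\bigr)$ since $N(N+1)=2$; and the coefficient of $y$ becomes $\tfrac{n}{rN}=n$. Substituting, Theorem~\ref{appell-dif} becomes
$$\frac{B_{n}}{n!}y^{(n)}+\frac{B_{n-1}}{(n-1)!}y^{(n-1)}+\cdots+\frac{B_{2}}{2!}y''-\left(x-\frac 1{2}\right)y'+ny=0$$
with $y=B_n(x)$, which is the assertion of Corollary~\ref{coro-2} and recovers (\ref{HR}). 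As a consistency check one may also observe that this matches Lu's Corollary~\ref{coro-0} at $r=1$, using $B_n(1)=B_n$ for $n\ge2$.

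I expect essentially no obstacle: the corollary is a pure specialization, and the only point requiring (brief) care is the normalization, namely the verification that $B_{1,n}=B_n$ and $B_{1,n}^{(1)}(x)=B_n(x)$ directly from the generating functions (\ref{hn-def1}) and (\ref{hn-defh}). All of the substantive work sits in the proof of Theorem~\ref{appell-dif}, which rests on the Appell-sequence characterization (\ref{HB0})--(\ref{HBi}) of the $B_{N,n}(x)$ together with the He--Ricci method of \cite{HR}; an independent proof of Corollary~\ref{coro-2} could be obtained by running that method directly in the classical case, but passing through Theorem~\ref{appell-dif} is the more economical route.
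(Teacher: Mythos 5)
Your proposal is correct and matches the paper's route exactly: the paper obtains this corollary simply by setting $N=r=1$ in Theorem~\ref{appell-dif}, using $B_{1,n}(x)=B_n(x)$ and the evaluations $\frac{x}{rN}-\frac{1}{N(N+1)}=x-\frac12$, $\frac{n}{rN}=n$, just as you do. Nothing is missing; all substantive work indeed lies in Theorem~\ref{appell-dif} itself.
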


A linear recurrence for higher order Bernoulli polynomials generalizing results of Lu \cite{Lu} appears as consequence of the proof.

\begin{theorem}\label{appell-rec}
For $n\in\mathbb N,$ the higher order hypergeometric Bernoulli polynomials $B_{N,n}^{(r)}(x)$ satisfy the recurrence
$$B_{N,n+1}^{(r)}(x)=\left(x-\frac r{N+1}\right)B_{N,n}^{(r)}(x)-rN\sum_{k=0}^{n-1}\binom nk\frac{B_{N,n-k+1}}{n-k+1}B_{N,k}^{(r)}(x).$$
\end{theorem}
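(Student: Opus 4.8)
The plan is to work directly with the generating function and exploit the Appell-type structure of the $B_{N,n}^{(r)}(x)$. Write $G(t)=\dfrac{t^N/N!}{e^t-T_{N-1}(t)}$, so that $G(t)=\sum_{n\ge0}B_{N,n}\,t^n/n!$ with $B_{N,0}=1$ and $B_{N,1}=-\tfrac{1}{N+1}$, and put
\[
F(x,t)=G(t)^r e^{xt}=\sum_{n\ge0}B_{N,n}^{(r)}(x)\frac{t^n}{n!}.
\]
Since $\partial_x F=tF$ we recover the Appell relation $(B_{N,n}^{(r)})'(x)=nB_{N,n-1}^{(r)}(x)$; the recurrence of the theorem will instead come from differentiating in $t$, that is, from
\[
\frac{\partial}{\partial t}F(x,t)=\Bigl(x+r\,\frac{G'(t)}{G(t)}\Bigr)F(x,t),
\]
whose left-hand side equals $\sum_{n\ge0}B_{N,n+1}^{(r)}(x)\,t^n/n!$.

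The one genuine computation is the logarithmic derivative of $G$. Using $T_{N-1}'(t)=T_{N-2}(t)$ together with $T_{N-2}(t)=T_{N-1}(t)-t^{N-1}/(N-1)!$, I would get
\[
\frac{G'(t)}{G(t)}=\frac{N}{t}-\frac{e^t-T_{N-2}(t)}{e^t-T_{N-1}(t)}=\frac{N}{t}-1-\frac{t^{N-1}/(N-1)!}{e^t-T_{N-1}(t)}=\frac{N}{t}\bigl(1-G(t)\bigr)-1,
\]
the last step because $\dfrac{t^{N-1}/(N-1)!}{e^t-T_{N-1}(t)}=\dfrac{N}{t}\,G(t)$ by the very definition of $G$. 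Expanding $\tfrac{N}{t}(1-G(t))=-N\sum_{m\ge0}\tfrac{B_{N,m+1}}{(m+1)!}t^m$ and pulling out the $m=0$ term with $B_{N,1}=-\tfrac1{N+1}$, this becomes
\[
\frac{G'(t)}{G(t)}=-\frac{1}{N+1}-N\sum_{m\ge1}\frac{B_{N,m+1}}{(m+1)!}\,t^m.
\]

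It then remains to substitute this expansion into the displayed identity $\partial_t F=(x+rG'/G)F$, extract the coefficient of $t^n/n!$ on both sides by the Cauchy product, reindex the convolution by $k=n-m$, and simplify $\dfrac{n!}{(n-k+1)!\,k!}=\dbinom{n}{k}\dfrac{1}{n-k+1}$. This produces exactly
\[
B_{N,n+1}^{(r)}(x)=\Bigl(x-\frac{r}{N+1}\Bigr)B_{N,n}^{(r)}(x)-rN\sum_{k=0}^{n-1}\binom{n}{k}\frac{B_{N,n-k+1}}{n-k+1}B_{N,k}^{(r)}(x),
\]
the case $n=0$ (empty sum, $B_{N,0}^{(r)}(x)=1$) giving the consistent check $B_{N,1}^{(r)}(x)=x-\tfrac{r}{N+1}$. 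The main obstacle is the logarithmic-derivative step—recognizing that $t^{N-1}/(N-1)!$ over $e^t-T_{N-1}(t)$ is simply $\tfrac{N}{t}G(t)$, which is precisely what forces the hypergeometric Bernoulli numbers $B_{N,m+1}$ to reappear on the right-hand side; everything after that is routine formal-power-series bookkeeping, with the only inputs being this identity, the Appell property, and the normalization $B_{N,1}=-\tfrac1{N+1}$.
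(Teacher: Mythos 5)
Your proposal is correct, and its central computation is exactly the one the paper relies on: the logarithmic derivative $\frac{G'(t)}{G(t)}=\frac{N}{t}\bigl(1-G(t)\bigr)-1$, obtained from $T_{N-1}'=T_{N-2}=T_{N-1}-t^{N-1}/(N-1)!$ and the observation that $\frac{t^{N-1}/(N-1)!}{e^t-T_{N-1}(t)}=\frac{N}{t}G(t)$, which is precisely Lemma 2.1 of the paper (used there with $x=0$ to get $A'/A$ for $A=G^r$), and the expansion $-\frac{1}{N+1}-N\sum_{m\ge1}\frac{B_{N,m+1}}{(m+1)!}t^m$ agrees with the paper's display. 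Where you diverge is in how the recurrence is extracted from this expansion: the paper stays inside the Appell/quasi-monomial formalism, quoting He--Ricci's formula for the multiplicative operator $\hat M=(x+\alpha_0)+\sum_{k=0}^{n-1}\frac{\alpha_{n-k}}{(n-k)!}D_x^{n-k}$, applying $\hat M B_{N,n}^{(r)}=B_{N,n+1}^{(r)}$, and then converting the $x$-derivatives via its Lemma 4.1, whereas you differentiate the generating function in $t$, write $\partial_t F=(x+rG'/G)F$, and compare coefficients through the Cauchy product with the reindexing $k=n-m$ and $\frac{n!}{(n-k+1)!\,k!}=\binom{n}{k}\frac{1}{n-k+1}$. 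The two derivations are equivalent in content (He--Ricci's operator identity is exactly the operator form of your $t$-derivative identity), but yours is more self-contained and elementary, needing no external input beyond the generating function and $B_{N,1}=-\frac{1}{N+1}$, while the paper's choice buys a uniform framework that it reuses immediately to deduce the differential equation of Theorem 1.4; your sketch of the coefficient extraction, though compressed, closes without gaps.
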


The special case $N = 1$ gives the following statement.

\begin{corollary}[Lu {\cite[Theorem 2.1]{Lu}}]\label{coro-l}
For $n\in\mathbb N,$ the higher order Bernoulli polynomials $B_{n}^{(r)}(x)$ satisfy the recurrence
$$B_{n+1}^{(r)}(x)=\left(x-\frac 1{2}r\right)B_{n}^{(r)}(x)-r\sum_{k=0}^{n-1}\binom nk\frac{B_{n-k+1}(1)}{n-k+1}B_{k}^{(r)}(x).$$
\end{corollary}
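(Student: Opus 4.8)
The plan is to obtain the recurrence by differentiating the generating function \eqref{hn-defh} with respect to $t$, exploiting the fact that $B_{N,n}^{(r)}(x)$ is an Appell sequence in $x$. I would set $h(t)=\dfrac{t^{N}/N!}{e^{t}-T_{N-1}(t)}=\sum_{n\ge 0}B_{N,n}\dfrac{t^{n}}{n!}$ (the $x=0$ case of \eqref{hn-def1}) and $F(x,t)=h(t)^{r}e^{xt}=\sum_{n\ge 0}B_{N,n}^{(r)}(x)\dfrac{t^{n}}{n!}$. Differentiating in $t$ gives $\partial_{t}F=\bigl(x+r\,h'(t)/h(t)\bigr)F(x,t)$, and the left-hand side is $\sum_{n\ge 0}B_{N,n+1}^{(r)}(x)\,t^{n}/n!$. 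Thus the whole problem reduces to identifying the power series $h'(t)/h(t)$.

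The key input is the first-order relation
\[
t\,h'(t)+t\,h(t)+N\,h(t)^{2}-N\,h(t)=0,\qquad\text{equivalently}\qquad \frac{h'(t)}{h(t)}=-1-\frac{N\,(h(t)-1)}{t}.
\]
I would prove it from the elementary identity $\frac{d}{dt}\!\left(e^{t}-T_{N-1}(t)\right)=\left(e^{t}-T_{N-1}(t)\right)+\frac{t^{N-1}}{(N-1)!}$ (just $T_{N-1}'=T_{N-2}$): writing $D(t)=e^{t}-T_{N-1}(t)$, so that $h=\frac{t^{N}}{N!\,D}$, one differentiates and then substitutes $D'=D+\frac{t^{N-1}}{(N-1)!}$, whereupon all terms cancel. (Equivalently, phrasing it via $g(t)=1/h(t)={}_{1}F_{1}(1,N+1;t)$, the same computation gives $t\,g'(t)=(t-N)g(t)+N$.) Setting $t=0$ after one further differentiation records the value $B_{N,1}=h'(0)=-\frac{1}{N+1}$, which will be needed below.

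Substituting the relation into $\partial_{t}F$ gives $\partial_{t}F=(x-r)F(x,t)-rN\,\dfrac{h(t)-1}{t}\,F(x,t)$. Since $h(t)-1=\sum_{j\ge 0}B_{N,j+1}\,\dfrac{t^{j+1}}{(j+1)!}$, we have $\dfrac{h(t)-1}{t}=\sum_{j\ge 0}\dfrac{B_{N,j+1}}{j+1}\dfrac{t^{j}}{j!}$, so the Cauchy product of exponential generating functions, on comparing coefficients of $t^{n}/n!$, yields
\[
B_{N,n+1}^{(r)}(x)=(x-r)B_{N,n}^{(r)}(x)-rN\sum_{k=0}^{n}\binom{n}{k}\frac{B_{N,n-k+1}}{n-k+1}\,B_{N,k}^{(r)}(x).
\]
I would then peel off the $k=n$ summand, which equals $rN\,B_{N,1}\,B_{N,n}^{(r)}(x)=-\frac{rN}{N+1}B_{N,n}^{(r)}(x)$; this converts the coefficient of $B_{N,n}^{(r)}(x)$ from $x-r$ into $x-\frac{r}{N+1}$ and leaves exactly the stated recurrence. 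Putting $N=1$ and using $B_{1,m}=B_{m}=B_{m}(1)$ for $m\ge 2$ (note $n-k+1\ge 2$ in the remaining sum) then gives Corollary \ref{coro-l}.

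The only genuinely delicate step is the verification of the first-order relation for $h(t)$; after that, matching coefficients in the generating-function identity and splitting off the $k=n$ term is routine, and it is precisely this separation that accounts for the slightly unexpected constant $-\frac{r}{N+1}$ appearing in the recurrence.
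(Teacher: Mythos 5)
Your argument is correct, and it reaches the corollary by a route that is computationally equivalent to, but formally different from, the paper's. The paper proves the general recurrence (Theorem \ref{appell-rec}) through the Appell/quasi-monomial formalism of He and Ricci: it computes $A'(t)/A(t)$ for $A(t)=\bigl(\frac{t^N/N!}{e^t-T_{N-1}(t)}\bigr)^r$ exactly as you do (this is \eqref{hy-i-1}, resting on Lemma \ref{lem1}), plugs the coefficients $\alpha_n$ into the multiplicative operator $\hat M$ of \eqref{hr-1}, and then applies $\hat M$ to $B_{N,n}^{(r)}(x)$ using Lemma \ref{lem21}; the corollary is then read off as the case $N=1$. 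You bypass the operator machinery entirely: differentiating $F(x,t)=h(t)^r e^{xt}$ in $t$, using the logarithmic-derivative relation $h'/h=-1-N(h-1)/t$ (your rederivation of Lemma \ref{lem1}(i) is correct), expanding $(h(t)-1)/t$ as an exponential generating function, and comparing coefficients via the Cauchy product gives the same identity, with the peeled-off $k=n$ term supplying the shift from $x-r$ to $x-\frac{r}{N+1}$ --- the same role played in the paper by the constant $\alpha_0=-\frac{r}{N+1}$. Your version is self-contained (no appeal to the He--Ricci operator theorem), while the paper's buys the differential equation of Theorem \ref{appell-dif} from the same framework with no extra work. You also make explicit a point the paper leaves tacit in deducing Corollary \ref{coro-l}: since $k\le n-1$ forces $n-k+1\ge 2$, one may replace $B_{n-k+1}$ by $B_{n-k+1}(1)$, which is exactly how Lu's statement is phrased.
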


Letting  $N=r=1$ and replace $n$ by $n+1$ in Theorem \ref{appell-rec}, we obtain the next result.

\begin{corollary}[He and Ricci {\cite[Theorem 2.2]{HR}}]\label{coro-2}
For $n\in\mathbb N,$ the Bernoulli polynomials $B_{n}(x)$ satisfy the recurrence
$$B_{n}(x)=\left(x-\frac 1{2}\right)B_{n-1}(x)-\frac1n\sum_{k=0}^{n-2}\binom nk B_{n-k} B_{k}(x).$$
\end{corollary}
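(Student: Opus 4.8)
The plan is to obtain this corollary as a direct specialization of Theorem~\ref{appell-rec}; no fresh idea is required, only careful bookkeeping with indices and one elementary binomial identity.

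First I would put $N=r=1$ in Theorem~\ref{appell-rec}. By the remarks following~\eqref{hn-defh} together with~\eqref{cl-def}, one has $B_{1,n}^{(1)}(x)=B_{n}(x)$ and $B_{1,n}=B_{n}$; also $\frac{r}{N+1}=\frac12$ and $rN=1$. Hence the recurrence reads
\begin{equation*}
B_{n+1}(x)=\left(x-\tfrac12\right)B_{n}(x)-\sum_{k=0}^{n-1}\binom nk\frac{B_{n-k+1}}{n-k+1}B_{k}(x),\qquad n\in\mathbb N.
\end{equation*}
Note that every Bernoulli number occurring here has subscript $\geq2$, so there is no ambiguity with the value $B_1$.

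Next I would shift the index so that the left-hand side becomes $B_{n}(x)$, i.e. replace $n$ by $n-1$; this is legitimate for $n\geq2$, while the case $n=1$ is immediate since both sides equal $x-\tfrac12$. This gives
\begin{equation*}
B_{n}(x)=\left(x-\tfrac12\right)B_{n-1}(x)-\sum_{k=0}^{n-2}\binom{n-1}{k}\frac{B_{n-k}}{n-k}B_{k}(x).
\end{equation*}
Finally I would rewrite the coefficient using $\binom{n-1}{k}\frac1{n-k}=\frac{(n-1)!}{k!\,(n-k)!}=\frac1n\binom nk$; pulling the factor $\frac1n$ out of the sum turns the last display into the claimed identity.

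I do not expect any genuine obstacle: the entire content sits inside Theorem~\ref{appell-rec}, and the remaining steps are purely formal. If a self-contained proof were wanted instead, the same formula follows by differentiating $G(x,t)=\frac{te^{xt}}{e^{t}-1}=\sum_{n\ge0}B_n(x)\tfrac{t^n}{n!}$ in $t$, using the relation $\partial_t G=\bigl(x+H(t)\bigr)G$ with $H(t)=\frac1t-\frac{e^t}{e^t-1}$, and comparing coefficients of $t^n/n!$; there the only mildly delicate point is expanding $H(t)$ at $t=0$, which one does via $\frac{te^t}{e^t-1}=\sum_{m\ge0}B_m(1)\tfrac{t^m}{m!}$ and $B_m(1)=B_m$ for $m\ne1$.
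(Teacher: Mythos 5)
Your proposal is correct and follows exactly the paper's route: the paper also obtains the corollary by setting $N=r=1$ in Theorem~\ref{appell-rec} and relabelling the index (the paper phrases the shift as replacing $n$ by $n+1$ in the corollary, which is the same relabelling as your $n\mapsto n-1$), with the identity $\binom{n-1}{k}\frac{1}{n-k}=\frac1n\binom nk$ doing the final rewriting. Your observation that all Bernoulli subscripts in the sum are at least $2$ (so $B_m=B_m(1)$ issues never arise) is a nice bit of care the paper leaves implicit.
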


\begin{remark}
The special case $r = 1$ gives some results presented in \cite{Na}.
\end{remark}

\section{Proof of Theorem \ref{sum-pro}}\label{thm-proof}

Introduce the notations
\begin{equation}\label{hn-def-po}
F_{r,N}(t,x)=\left(\frac{t^N/N!}{e^t-T_{N-1}(t)}\right)^{r}e^{xt},
\end{equation}
\begin{equation}\label{hn-def-pol}
F_{r,N}(t)=F_{r,N}(t,0)=\left(\frac{t^N/N!}{e^t-T_{N-1}(t)}\right)^r
\end{equation}
and
\begin{equation}\label{hn-def-nu}
F_N(t)=F_{1,N}(t)=\frac{t^N/N!}{e^t-T_{N-1}(t)}.
\end{equation}

The proof begins with an auxiliary result.

\begin{lemma}\label{lem1}
$\quad$
\begin{enumerate}
\item[\rm (i)] $\frac{\rm d}{{\rm d}t}F_N(t)=\frac Nt F_N(t) -F_N(t)-\frac Nt F_{2,N}(t).$
\item[\rm (ii)] $\frac{\rm d}{{\rm d}t}F_{r,N}(t,x)=\frac {rN}t F_{r,N}(t,x) +(x-r)F_{r,N}(t,x)-\frac {rN}t F_{r+1,N}(t,x).$
\end{enumerate}
\end{lemma}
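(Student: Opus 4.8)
The plan is to compute both derivatives directly from the explicit formulas \eqref{hn-def-nu} and \eqref{hn-def-pol}--\eqref{hn-def-po}, using the single key observation that $\frac{\rm d}{{\rm d}t}\bigl(e^t-T_{N-1}(t)\bigr)=e^t-T_{N-2}(t)=\bigl(e^t-T_{N-1}(t)\bigr)+\frac{t^{N-1}}{(N-1)!}$, since differentiating the exponential Taylor polynomial just shifts its order down by one. For part (i), write $F_N(t)=\frac{t^N}{N!}\bigl(e^t-T_{N-1}(t)\bigr)^{-1}$ and apply the product rule. The derivative of the monomial factor contributes $\frac{N}{t}F_N(t)$. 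The derivative of $\bigl(e^t-T_{N-1}(t)\bigr)^{-1}$ is $-\bigl(e^t-T_{N-1}(t)\bigr)^{-2}\bigl(e^t-T_{N-2}(t)\bigr)$; substituting $e^t-T_{N-2}(t)=\bigl(e^t-T_{N-1}(t)\bigr)+\frac{t^{N-1}}{(N-1)!}$ splits this into two pieces, one yielding $-F_N(t)$ and the other yielding $-\frac{N}{t}\cdot\frac{t^N/N!}{(e^t-T_{N-1}(t))^2}\cdot\frac{t^{?}}{?}$, which after collecting powers of $t$ is exactly $-\frac{N}{t}F_{2,N}(t)$, using $F_{2,N}(t)=\bigl(\frac{t^N/N!}{e^t-T_{N-1}(t)}\bigr)^2$. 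This gives the stated identity.

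For part (ii), note $F_{r,N}(t,x)=F_N(t)^r e^{xt}$, so by the product rule $\frac{\rm d}{{\rm d}t}F_{r,N}(t,x)=r F_N(t)^{r-1}F_N'(t)e^{xt}+x F_N(t)^r e^{xt}$. The second term is $xF_{r,N}(t,x)$. For the first term, substitute the expression for $F_N'(t)$ from part (i): $rF_N^{r-1}e^{xt}\cdot\bigl(\frac{N}{t}F_N-F_N-\frac{N}{t}F_{2,N}\bigr)=\frac{rN}{t}F_N^r e^{xt}-rF_N^r e^{xt}-\frac{rN}{t}F_N^{r-1}F_{2,N}e^{xt}$. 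Since $F_N^r e^{xt}=F_{r,N}(t,x)$ and $F_N^{r-1}F_{2,N}e^{xt}=F_N^{r+1}e^{xt}=F_{r+1,N}(t,x)$, this equals $\frac{rN}{t}F_{r,N}(t,x)-rF_{r,N}(t,x)-\frac{rN}{t}F_{r+1,N}(t,x)$. Adding back the $xF_{r,N}(t,x)$ term and combining the two multiples of $F_{r,N}(t,x)$ yields $\frac{rN}{t}F_{r,N}(t,x)+(x-r)F_{r,N}(t,x)-\frac{rN}{t}F_{r+1,N}(t,x)$, as claimed.

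The computation is essentially routine, so there is no serious obstacle; the one point requiring care is the bookkeeping of powers of $t$ in part (i) when rewriting $\frac{t^N/N!}{(e^t-T_{N-1}(t))^2}\cdot\frac{t^{N-1}}{(N-1)!}$ as $\frac{N}{t}F_{2,N}(t)$ — one must check that $\frac{t^N}{N!}\cdot\frac{t^{N-1}}{(N-1)!}=\frac{N}{t}\cdot\frac{(t^N/N!)^2}{1}$, i.e. that $\frac{t^{N-1}}{(N-1)!}=\frac{N}{t}\cdot\frac{t^N}{N!}$, which is immediate. Part (ii) is then purely formal once part (i) is in hand.
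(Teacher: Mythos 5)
Your proposal is correct and follows essentially the same route as the paper: differentiate $F_N(t)$ directly using $\frac{\rm d}{{\rm d}t}\bigl(e^t-T_{N-1}(t)\bigr)=e^t-T_{N-2}(t)=e^t-T_{N-1}(t)+\frac{t^{N-1}}{(N-1)!}$ (the paper uses the quotient rule, you the equivalent product rule), and then deduce (ii) from (i) via $F_{r,N}(t,x)=F_N(t)^r e^{xt}$, a step the paper leaves implicit but which you carry out correctly.
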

\begin{proof}
Differentiating (\ref{hn-def-nu}) with respect to $t$, yields
$$\begin{aligned}
\frac{\rm d}{{\rm d}t}F_N(t)&=\frac{Nt^{N-1}/N!(e^t-T_{N-1}(t))-t^N/N!(e^t-T_{N-2}(t))}{(e^t-T_{N-1}(t))^2}\\
&=\frac Nt\frac{t^{N}/N!}{(e^t-T_{N-1}(t))}-\frac{t^N/N!(e^t-T_{N-1}(t)+t^{N-1}/(N-1)!)}{(e^t-T_{N-1}(t))^2}\\
&=\frac Nt F_N(t) -F_N(t)-\frac Nt F_{2,N}(t),
\end{aligned}$$
giving Part (i). Part (ii) follows from Part (i).
\end{proof}

The notations introduced above states that
\begin{equation}
F_{r,N}(t,x)=\sum_{n=0}^\infty B_{N,n}^{(r)}(x)\frac{t^n}{n!}.
\end{equation}
Comparing coefficients of equal powers in Lemma \ref{lem1}(ii) gives the recurrence stated next.

\begin{lemma}\label{lem3}
Let $n\in\mathbb N.$ Then
$$B_{N,n}^{(r+1)}(x)=\frac1N\left(N-\frac nr\right)B_{N,n}^{(r)}(x)+\frac1N\frac nr(x-r)B_{N,n-1}^{(r)}(x).$$
\end{lemma}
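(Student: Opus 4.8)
The plan is to follow the hint in the sentence preceding the lemma: take the differential identity of Lemma~\ref{lem1}(ii), clear the factor $1/t$, and then read off the recurrence by comparing Taylor coefficients.

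First I would multiply Lemma~\ref{lem1}(ii) by $t$ to obtain the formal power series identity
$$t\,\frac{\rm d}{{\rm d}t}F_{r,N}(t,x)=rN\,F_{r,N}(t,x)+(x-r)\,t\,F_{r,N}(t,x)-rN\,F_{r+1,N}(t,x).$$
Since $e^t-T_{N-1}(t)=t^N\bigl(\tfrac1{N!}+\tfrac t{(N+1)!}+\cdots\bigr)$, the functions $F_{r,N}(t,x)$ and $F_{r+1,N}(t,x)$ are analytic at $t=0$ with value $1$ there, so after multiplication by $t$ each of the four terms above is regular at the origin and the identity is legitimate in $\mathbb{Q}[x][[t]]$ (equivalently, the two $1/t$ terms in Lemma~\ref{lem1}(ii) have a difference that vanishes at $t=0$).

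Next I would substitute the expansions $F_{r,N}(t,x)=\sum_{n\ge0}B_{N,n}^{(r)}(x)\,t^n/n!$ and $F_{r+1,N}(t,x)=\sum_{n\ge0}B_{N,n}^{(r+1)}(x)\,t^n/n!$. The operator $t\frac{\rm d}{{\rm d}t}$ sends a series $\sum a_n t^n/n!$ to $\sum n a_n t^n/n!$, while multiplication by $t$ shifts the index, so extracting the coefficient of $t^n/n!$ for $n\ge1$ yields
$$n\,B_{N,n}^{(r)}(x)=rN\,B_{N,n}^{(r)}(x)+n(x-r)\,B_{N,n-1}^{(r)}(x)-rN\,B_{N,n}^{(r+1)}(x).$$
Solving for $B_{N,n}^{(r+1)}(x)$ and dividing by $rN$ gives
$$B_{N,n}^{(r+1)}(x)=\frac1N\Bigl(N-\frac nr\Bigr)B_{N,n}^{(r)}(x)+\frac1N\,\frac nr\,(x-r)\,B_{N,n-1}^{(r)}(x),$$
which is the assertion; the case $n=0$ holds trivially since both sides equal $1$.

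I do not expect a genuine obstacle here. The only points needing a moment of care are the justification that clearing the $1/t$ is valid at the level of formal power series (handled by the regularity remark above) and the routine reindexing when applying $t\frac{\rm d}{{\rm d}t}$ and multiplication by $t$ to the generating series; the remainder is a single line of coefficient comparison, so the proof should be very short.
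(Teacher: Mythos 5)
Your proposal is correct and is exactly the paper's route: the paper obtains Lemma~\ref{lem3} by comparing coefficients in Lemma~\ref{lem1}(ii), which is what you do after clearing the $1/t$ factor. Your coefficient extraction (giving $nB_{N,n}^{(r)}=rNB_{N,n}^{(r)}+n(x-r)B_{N,n-1}^{(r)}-rNB_{N,n}^{(r+1)}$ and then solving) is the computation the paper leaves implicit, and it checks out.
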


Let $\binom{n}{i_1,\ldots,i_r}=\frac{n!}{i_1!\cdots i_r!}$
be the multinomial coefficient. The next result gives $B_{N,n}^{(r)}(x)$ in terms of $B_{N,i}(x).$

\begin{lemma}\label{lem4}
The identity holds for $n,r,N\in\mathbb N$
\begin{equation}\label{m-B-E}
B_{N,n}^{(r)}(x)=\sum_{\substack{i_1+\cdots+i_r=n \\ i_1,\ldots,i_r\geq0}}
\binom{n}{i_1,\ldots,i_r}B_{N,i_1}(x_1)\cdots B_{N,i_r}(x_r)
\end{equation}
with $x=x_1+\cdots+x_r$.
\end{lemma}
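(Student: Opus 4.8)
The plan is to prove Lemma~\ref{lem4} by comparing generating functions. Recall from (\ref{hn-def-po}) and the displayed identity $F_{r,N}(t,x)=\sum_{n\geq 0}B_{N,n}^{(r)}(x)t^n/n!$ that
\begin{equation*}
\sum_{n=0}^\infty B_{N,n}^{(r)}(x)\frac{t^n}{n!}=\left(\frac{t^N/N!}{e^t-T_{N-1}(t)}\right)^r e^{xt}.
\end{equation*}
Since $x=x_1+\cdots+x_r$, one has $e^{xt}=e^{x_1t}\cdots e^{x_rt}$, so the right-hand side factors as
\begin{equation*}
\prod_{j=1}^r\left(\frac{t^N/N!}{e^t-T_{N-1}(t)}\,e^{x_jt}\right)=\prod_{j=1}^r F_{1,N}(t,x_j)=\prod_{j=1}^r\left(\sum_{i=0}^\infty B_{N,i}(x_j)\frac{t^i}{i!}\right),
\end{equation*}
using (\ref{hn-def1}) for each factor (i.e. $F_{1,N}(t,x_j)$ is exactly the generating function of the ordinary hypergeometric Bernoulli polynomials in the variable $x_j$).

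The next step is the routine Cauchy-product expansion: multiplying out the $r$ power series and collecting the coefficient of $t^n$, the coefficient of $t^n/n!$ in $\prod_{j=1}^r\bigl(\sum_i B_{N,i}(x_j)t^i/i!\bigr)$ equals
\begin{equation*}
\sum_{\substack{i_1+\cdots+i_r=n\\ i_1,\ldots,i_r\geq 0}}\frac{n!}{i_1!\cdots i_r!}B_{N,i_1}(x_1)\cdots B_{N,i_r}(x_r)=\sum_{\substack{i_1+\cdots+i_r=n\\ i_1,\ldots,i_r\geq 0}}\binom{n}{i_1,\ldots,i_r}B_{N,i_1}(x_1)\cdots B_{N,i_r}(x_r).
\end{equation*}
Equating this with the coefficient of $t^n/n!$ on the left-hand side, namely $B_{N,n}^{(r)}(x)$, yields (\ref{m-B-E}). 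One should note for clarity that although $B_{N,n}^{(r)}(x)$ was introduced as a function of a single variable $x$, the identity is an equality of polynomials and hence remains valid after the substitution $x=x_1+\cdots+x_r$, so both sides are genuinely polynomials in $x_1,\ldots,x_r$.

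There is essentially no obstacle here: the only point requiring a word of care is the legitimacy of rearranging the $r$-fold product of formal power series and extracting a single coefficient, which is standard since for each fixed $n$ only finitely many terms contribute. I would state the proof in three short lines — factor $e^{xt}$, recognise each factor as $F_{1,N}(t,x_j)$, take the Cauchy product — and remark that this lemma immediately reduces Theorem~\ref{sum-pro} to understanding the polynomials arising when $B_{N,n}^{(r)}(x)$ is re-expressed via the recurrence of Lemma~\ref{lem3} in terms of the $B_{N,m}(x)$.
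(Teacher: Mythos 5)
Your proof is correct and is exactly the argument the paper intends: the paper states Lemma \ref{lem4} without proof, treating it as immediate from the factorization $F_{r,N}(t,x)=\prod_{j=1}^r F_{1,N}(t,x_j)$ when $x=x_1+\cdots+x_r$, which is precisely your generating-function/Cauchy-product argument. No gaps.
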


\begin{proof}[Proof of the Theorem \ref{sum-pro}]
The proof proceeds by induction.

The case $r=1$ is clear. Lemma \ref{lem3} gives
$$\begin{aligned}
B_{N,n}^{(r)}(x)&=\frac1N\left(N-\frac n{r-1}\right)B_{N,n}^{(r-1)}(x)+\frac1N\frac n{r-1}(x-(r-1))B_{N,n-1}^{(r-1)}(x) \\
&=\frac1{N^{r-1}}\left(N-\frac n{r-1}\right)\sum_{i=0}^{r-2}A_{r-1}^{(N)}(i,x;1+N(r-2)-n) \\
&\quad\times(-1)^i\binom ni i! B_{N,n-i}(x) \\
&\quad+\frac1{N^{r-1}}\frac{n}{r-1}(x-(r-1))\sum_{i=0}^{r-2}A_{r-1}^{(N)}(i,x;1+N(r-2)-(n-1)) \\
&\quad\times(-1)^i\binom {n-1}i i! B_{N,n-1-i}(x)
\\
&=\frac1{N^{r-1}}\biggl[\left(N-\frac n{r-1}\right)\sum_{i=0}^{r-2}A_{r-1}^{(N)}(i,x;1+N(r-2)-n) \\
&\quad\times(-1)^i\binom ni i! B_{N,n-i}(x) \\
&\quad+\frac{n}{r-1}(x-(r-1))\sum_{i=1}^{r-1}A_{r-1}^{(N)}(i-1,x;1+N(r-2)-(n-1)) \\
&\quad\times(-1)^{i-1}\binom {n-1}{i-1} (i-1)! B_{N,n-i}(x)\biggl]
\\
&=\frac1{N^{r-1}}\sum_{i=0}^{r-1}\biggl[\left(N-\frac n{r-1}\right)A_{r-1}^{(N)}(i,x;1+N(r-2)-n)(-1)^i\binom ni i!  \\
&\quad+\frac{n}{r-1}(x-(r-1))A_{r-1}^{(N)}(i-1,x;1+N(r-2)-(n-1)) \\
&\quad\times(-1)^{i-1}\binom {n-1}{i-1} (i-1)! \biggl]B_{N,n-i}(x)
\\
&=\frac1{N^{r-1}}\sum_{i=0}^{r-1}\biggl[\frac{N(r-1)-n}{r-1}A_{r-1}^{(N)}(i,x;1+N(r-2)-n) \\
&\quad-\frac{1}{r-1}(x-(r-1))A_{r-1}^{(N)}(i-1,x;1+N(r-2)-(n-1))\biggl] \\
&\quad\times(-1)^{i}\binom {n}{i} i!B_{N,n-i}(x) \\
&=\frac1{N^{r-1}}\sum_{i=0}^{r-1}A_r^{(N)}(i,x;1+N(r-1)-n)(-1)^i\binom ni i! B_{N,n-i}(x) \\
&\quad\text{(using $s=1+N(r-1)-n$ in (\ref{thm-eq0})).}
\end{aligned}$$
Lemma \ref{lem4} gives the result.
\end{proof}

\section{Differential equations}
\subsection{Appell polynomials}

The proof for the differential equation satisfied by the higher order hypergeometric Bernoulli polynomials
requires some basic facts on Appell polynomials. These also appear in \cite{HR}.

The Appell polynomials \cite{Ap} are defined by the generating function:
\begin{equation}\label{ap-1}
A(t)e^{xt}=\sum_{n=0}^\infty\frac{R_n(x)}{n!}t^n,
\end{equation}
where
\begin{equation}\label{ap-2}
A(t)=\sum_{n=0}^\infty\frac{R_n}{n!}t^n, \quad A(0)\neq 0
\end{equation}
is an analytic function at $t=0,$ and $R_n=R_n(0).$

A polynomial $p_n(x)$ $(n\in\mathbb N, x\in\mathbb C)$ is said to be a quasi-monomial \cite{Lu}
if there are two operators, $\hat{M},\hat{P},$ called the multiplicative and derivative operators, such that
\begin{equation}\label{m-op-def}
\hat{M}(p_n(x))=p_{n+1}(x)
\end{equation}
and
\begin{equation}\label{p-op-def}
\hat{P}(p_n(x))=np_{n-1}(x),
\end{equation}
where it is assumed (as usual) that
$$p_0(x)=1\quad\text{and}\quad p_{-1}(x)=0.$$
The operators $\hat{M}$ and $\hat{P}$ must satisfy the commutation relation
\begin{equation}\label{com}
[\hat{P},\hat{M}]=\hat{P}\hat{M}-\hat{M}\hat{P}=\hat{I},
\end{equation}
where $\hat{I}$ denotes the identity operator.

He and Ricci \cite{HR} showed that the multiplicative and derivative operators of $R_n(x)$ are
\begin{equation}\label{hr-1}
\hat M=(x+\alpha_0)+\sum_{k=0}^{n-1}\frac{\alpha_{n-k}}{(n-k)!}D_x^{n-k}
\end{equation}
and
\begin{equation}\label{hr-2}
\hat P=D_x,
\end{equation}
where $D_x=\frac{\partial}{\partial x}$ and the coefficients $\alpha_n$ are defined by
\begin{equation}\label{hr-3}
\frac{A'(t)}{A(t)}=\sum_{n=0}^\infty \alpha_n\frac{t^n}{n!}.
\end{equation}

\subsection{Proofs of Theorems  \ref{appell-dif} and \ref{appell-rec}}\label{thm-proof2}

The proofs begin with an auxiliary result.

\begin{lemma}\label{lem21}
$$\frac{{\rm d}^p}{{\rm d} x^p}  B_{N,n}^{(r)}(x)=\frac{n!}{(n-p)!}B_{N,n-p}^{(r)}(x), \quad n\geq p.$$
\end{lemma}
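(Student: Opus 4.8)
The plan is to exploit that in the generating function
$$F_{r,N}(t,x)=\left(\frac{t^N/N!}{e^t-T_{N-1}(t)}\right)^{r}e^{xt}=\sum_{n=0}^\infty B_{N,n}^{(r)}(x)\frac{t^n}{n!}$$
the variable $x$ enters only through the factor $e^{xt}$, so that differentiation in $x$ is the same as multiplication by $t$. First I would record that $F_{r,N}(t,x)$ is, for fixed $x$, analytic in $t$ in a neighbourhood of $t=0$: the denominator satisfies $e^t-T_{N-1}(t)=t^N/N!+O(t^{N+1})$, hence $t^N/N!$ divided by it is analytic and nonzero at the origin, and multiplying by the entire function $e^{xt}$ preserves this. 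This justifies differentiating the power series term by term.

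Next I would establish the first-order (Appell) relation
$$\frac{\partial}{\partial x}F_{r,N}(t,x)=t\,F_{r,N}(t,x),$$
which is immediate from $\frac{\partial}{\partial x}e^{xt}=t e^{xt}$. Expanding both sides in powers of $t$ gives
$$\sum_{n=0}^\infty \frac{\rm d}{{\rm d}x}B_{N,n}^{(r)}(x)\frac{t^n}{n!}=\sum_{n=0}^\infty B_{N,n}^{(r)}(x)\frac{t^{n+1}}{n!}=\sum_{m=1}^\infty B_{N,m-1}^{(r)}(x)\frac{t^{m}}{(m-1)!},$$
and comparing the coefficients of $t^n/n!$ yields $\frac{\rm d}{{\rm d}x}B_{N,n}^{(r)}(x)=n\,B_{N,n-1}^{(r)}(x)$ for $n\geq1$ (the case $n=0$ being trivial since $B_{N,0}^{(r)}(x)=1$).

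Then I would iterate. Either one applies the previous identity $p$ times and telescopes the product $n(n-1)\cdots(n-p+1)=\frac{n!}{(n-p)!}$, giving a one-line induction on $p$; or, more directly, one differentiates the generating function $p$ times at once to get $\frac{\partial^p}{\partial x^p}F_{r,N}(t,x)=t^p F_{r,N}(t,x)=\sum_{n\geq p}B_{N,n-p}^{(r)}(x)\frac{t^{n}}{(n-p)!}$, and comparing the coefficient of $t^n/n!$ for $n\geq p$ produces $\frac{\rm d^p}{{\rm d}x^p}B_{N,n}^{(r)}(x)=\frac{n!}{(n-p)!}B_{N,n-p}^{(r)}(x)$, as claimed. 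There is no real obstacle here: the only point needing a word of care is the legitimacy of term-by-term differentiation, which is covered by the analyticity remark in the first step; everything else is a routine coefficient comparison.
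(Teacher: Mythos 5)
Your proposal is correct and follows essentially the same route as the paper: differentiating the generating function $p$ times in $x$ turns $e^{xt}$ into $t^p e^{xt}$, after which an index shift and comparison of coefficients of $t^n/n!$ gives the claimed identity. The extra remarks on analyticity and the alternative one-step-at-a-time induction are fine but not needed beyond what the paper does.
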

\begin{proof}
Start with
\begin{equation}\label{p1}
 \begin{aligned}
 \left(\frac{t^N/N!}{e^t-T_{N-1}(t)}\right)^{r} \frac{{\rm d}^p}{{\rm d} x^p} e^{xt}&= \left(\frac{t^N/N!}{e^t-T_{N-1}(t)}\right)^{r}(e^{xt}t^p)\\
 &=\left(\left(\frac{t^N/N!}{e^t-T_{N-1}(t)}\right)^{r}e^{xt}\right)t^p.
 \end{aligned}
\end{equation}
  Substituting (\ref{hn-defh}) into the right hand side of  (\ref{p1}), we have
 \begin{equation}\label{p2}
 \begin{aligned}
 \left(\frac{t^N/N!}{e^t-T_{N-1}(t)}\right)^{r}\frac{{\rm d}^p}{{\rm d} x^p}e^{xt}&=\left(\sum_{n=0}^\infty B_{N,n}^{(r)}(x)\frac{t^n}{n!}\right)t^p\\
 &=\sum_{n=0}^\infty B_{N,n}^{(r)}(x)\frac{t^{n+p}}{n!}\\&=\sum_{n=p}^{\infty} B_{N,n-p}^{(r)}(x)\frac{t^{n}}{(n-p)!}\\&=\sum_{n=p}^{\infty}\frac{B_{N,n-p}^{(r)}(x)}{(n-p)!}t^{n}.
 \end{aligned}
\end{equation}
Differentiating both sides of (\ref{hn-defh}) with respect to $x$ and using (\ref{p2}) gives
\begin{equation}\sum_{n=p}^{\infty}\frac{B_{N,n-p}^{(r)}(x)}{(n-p)!}t^{n}=\sum_{n=0}^\infty\frac{{\rm d}^p}{{\rm d} x^p}\frac{B^{(r)}_{N,n}(x)}{n!}t^n.\end{equation}
Comparing the coefficients of both sides of the above equality, produces the result.
\end{proof}

\begin{proof}[Proof of the Theorem \ref{appell-rec}]
From (\ref{hn-defh}), (\ref{ap-1}) and (\ref{ap-2}), we know that the hypergeometric Bernoulli polynomials $B_{N,n}^{(r)}(x)$ are Appell polynomials with
$$A(t)=\left(\frac{t^N/N!}{e^t-T_{N-1}(t)}\right)^r.$$
Using Lemma \ref{lem1}(ii) with $x=0$ gives
\begin{equation}\label{hy-i-1}
\begin{aligned}
\frac{A'(t)}{A(t)}&=\frac{rN}{t}-r-\frac{rN}{t}\left(\frac{t^N/N!}{e^t-T_{N-1}(t)}\right) \\
&=\frac{rN}{t}\left(1-\frac tN-\sum_{n=0}^\infty B_{N,n}\frac{t^N}{n!}\right) \\
&=\frac{rN}{t}\left(-\frac1{N(N+1)}t-\sum_{n=2}^\infty B_{N,n}\frac{t^N}{n!}\right)\\
&=r\left(-\frac1{N+1}-N\sum_{n=1}^\infty\frac{B_{N,n+1}}{n+1}\frac{t^N}{n!}\right).
\end{aligned}
\end{equation}
Here we use $B_{N,0}=1$ and $B_{N,1}=-1/(N+1).$
Then (\ref{hr-1})--(\ref{hr-3}) give the multiplicative and derivative operators of
the hypergeometric Bernoulli polynomials:
\begin{equation}\label{hr-1-1}
\hat M=\left(x-\frac r{N+1}\right)-rN\sum_{k=0}^{n-1}\frac{B_{N,n-k+1}}{(n-k+1)!}D_x^{n-k},
\end{equation}
\begin{equation}\label{hr-2-1}
\hat P=D_x.
\end{equation}
Then (\ref{hr-1-1}) gives
\begin{equation}\label{hr-2-3}\hat M B_{N,n}^{(r)}(x)=\left[\left(x-\frac r{N+1}\right)-rN\sum_{k=0}^{n-1}\frac{B_{N,n-k+1}}{(n-k+1)!}D_x^{n-k}\right]B_{N,n}^{(r)}(x).\end{equation}
(\ref{m-op-def}) with $p_{n}(x)= B_{N,n}^{(r)}(x)$ gives
$\hat M B_{N,n}^{(r)}(x)=B_{N,n+1}^{(r)}(x)$, thus by Lemma \ref{lem21}, (\ref{hr-2-3}) implies
$$B_{N,n+1}^{(r)}(x)=\left(x-\frac r{N+1}\right)B_{N,n}^{(r)}(x)-rN\sum_{k=0}^{n-1}\frac{B_{N,n-k+1}}{(n-k+1)!} \frac{n!}{(n-(n-k))!} B_{N,k}^{(r)}(x).$$
The result now follows from
$$\frac1{(n-k+1)!}\frac{n!}{(n-(n-k))!}=\frac1{n-k+1}\frac{n!}{k!(n-k)!}=\frac1{n-k+1}\binom nk.$$
\end{proof}

\begin{proof}[Proof of the Theorem \ref{appell-dif}]
Replacing $n$ by $n+1$ in Theorem \ref{appell-rec}, both sides produces
\begin{equation}\label{dif-1}
\begin{aligned}
\frac1{rN}B_{N,n}^{(r)}(x)&-\left(\frac{x}{rN}-\frac 1{N(N+1)}\right)B_{N,n-1}^{(r)}(x) \\
&+\sum_{k=0}^{n-2}\binom {n-1}k\frac{B_{N,n-k}}{n-k}B_{N,k}^{(r)}(x)=0.
\end{aligned}
\end{equation}
In Lemma \ref{lem21}, denote by $y=B_{N,n}^{(r)}(x)$, and shift indices to produce
\begin{equation}\label{dif-1-2}
B_{N,k}^{(r)}(x)=\frac{k!}{n!}y^{(n-k)}, \quad k=0,\ldots,n.
\end{equation}
This gives
\begin{equation}\label{dif-2}
\begin{aligned}
\frac1{rN}y&-\left(\frac{x}{rN}-\frac 1{N(N+1)}\right)\frac1n y' \\
&+\sum_{k=0}^{n-2}\binom {n-1}k\frac{B_{N,n-k}}{n-k}\frac{k!}{n!}y^{(n-k)}=0
\end{aligned}
\end{equation}
and writing this as
\begin{equation}\label{dif-3}
\frac1{rN}y-\frac1n\left(\frac{x}{rN}-\frac 1{N(N+1)}\right) y'+\frac1n\sum_{k=0}^{n-2}{B_{N,n-k}}\frac1{(n-k)!}y^{(n-k)}=0,
\end{equation}
produces the result.

\end{proof}

\section*{Acknowledgment}
This work  was supported by the Kyungnam University Foundation Grant, 2015.

\bibliographystyle{amsplain}

\end{document}